\date{\today}
\let\oldsection\section
\renewcommand\section{\setcounter{equation}{0}\oldsection}
\newtheorem{theorem}{Theorem}[section]
\newtheorem{lemma}{Lemma}[section]
\newtheorem{remark}{Remark}[section]
\def\ba{\begin{eqnarray}}
\def\ea{\end{eqnarray}}
\def\R{\Bbb R}
\newcommand{\beq}{\begin{equation}}
\newcommand{\eeq}{\end{equation}}
\newcommand{\ben}{\begin{eqnarray}}
\newcommand{\een}{\end{eqnarray}}
\newcommand{\beno}{\begin{eqnarray*}}
\newcommand{\eeno}{\end{eqnarray*}}
\begin{document}

\title[An attempt at axiomatization of extending mechanism]{An attempt at axiomatization of extending mechanism of solutions to the fluid dynamical systems}

\author{Jinkai~Li}
\address[Jinkai~Li]{South China Research Center for Applied Mathematics and Interdisciplinary Studies, School of Mathematical Sciences, South China Normal University, Zhong Shan Avenue West 55,
Tianhe District, Guangzhou 510631, China}
\email{jklimath@m.scnu.edu.cn; jklimath@gmail.com}

\author{Meng Wang}
\address[Meng Wang]{Department of Mathematics, Zhejiang University, Hangzhou
310027, China}
\email{mathdreamcn@zju.edu.cn}

\author{Wendong Wang}
\address[Wendong Wang]{School of Mathematical Sciences, Dalian University of Technology, Dalian, 116024,  China}
\email{wendong@dlut.edu.cn}
%

\keywords{interpolation inequalities of Besov type; Blow-up criteria; Navier-Stokes equations}
\subjclass[2010]{35Q30, 76D03}


\begin{abstract}
Note that some classic fluid dynamical systems such as the Navier-Stokes equations, Magnetohydrodynamics (MHD),
Boussinesq equations, and etc are observably different from each other but obey some energy inequalities of
the similar type. In this paper, we attempt to axiomatize the extending mechanism of solutions to these systems,
merely starting from several basic axiomatized conditions such as the local existence, joint property of
solutions and some energy inequalities. The results established have nothing to do with the concrete forms of the
systems and, thus, give the extending mechanisms in a unified way to all systems obeying the
axiomatized conditions. The key tools are several new multiplicative interpolation inequalities of Besov type,
which have their own interests.
\end{abstract}

\maketitle

\allowdisplaybreaks

\section{Introduction}\label{sec1}
The fluid dynamical models, just mention some of them, include
the Navier-Stokes equations, Magnetohydrodynamics (MHD) and the liquid crystals. Generally speaking, for those models with viscosities, the local well-posedness can
be established in somewhat standard way; however, recalling that it is a well-known open question to prove the global existence of smooth solutions to the
three-dimensional Navier-Stokes equations, though a lot
of attentions have been made, we are still far from the complete
mathematical understanding of the the global well-posedness of
these systems.

To understand the possible singularity or global regularity, it may
be helpful for us to study some blow-up criteria. The aim of establishing the blow-up criteria is to find such conditions as weak
as possible that ensure the global regularity of the solutions.
There have been a
lot of works on establishing the blow-up criteria for the classic fluid dynamical
systems, in particular for the systems mentioned above, in the existing
literatures. There are two well-known blow-up criteria
for the Navier-Stokes equations, the Ladyzhenskaya--Prodi--Serrin
type (see, e.g., \cite{Serrin,Struwe}) and the Beale--Kato--Majda type (see \cite{BKM}). The
Ladyzhenskaya--Prodi--Serrin blow-up criteria imply that if  the following Serrin condition
\begin{eqnarray*}
u\in L^q(0,T; L^p(\mathbb R^3))\quad \textrm{with} \quad \frac 2 q+\frac 3p\le 1, \quad 3< p\leq \infty,
\end{eqnarray*}
holds for a positive time $T$ and a pair of $(p,q)$, then the solution to the
Navier-Stokes equations will not blow-up at time $T$; while
the Beale--Kato--Majda blow up criteria tell us that as long as
the $L^1(0,T; BMO(\mathbb R^3))$ norm of the vorticity is finite, then the
solution to the Navier-Stokes equations can be extended beyond the
time $T$. Both cases can be proved by the energy method with Gagliardo-Nirenberg interpolation inequality or some embedding inequality of logarithmic type. Note that the endpoint case of  the Serrin condition $u\in L^\infty(0,T; L^3(\R^3))$ is different, which was proved
by Escauriaza,  Seregin and  \v{S}ver\'{a}k \cite{ESS} by employing blow-up analysis and the backward uniqueness property of the parabolic operator.

Many generalizations and extensions to other systems of these two kinds of blow-up criteria have been made. For the Navier-Stokes and MHD equations, Ladyzhenskaya--Prodi--Serrin's  and Beale--Kato--Majda's criteria were considered in Sobolev space or Besov space,  for example see \cite{CMZ1,CMZ2,FangQian,Giga, Kozono, HeWang, HeXin2005, Wu1, Wu2, Zhou} and the references therein.

We note that though the systems from the fluid dynamics may be observably different from each other, they may obey some energy inequalities of the same type. For example, one can easily check that the Navier-Stokes equations, MHD and Boussineq equations satisfy the same type energy inequalities stated in (H4), (H4') and (H5), below. Therefore, there should be some common features for these systems, such as the well-posedness under the same assumptions on the initial data, and the same type blow-up criteria under the same conditions on the solutions. Keeping this in mind, it will be very interesting to reveal all these common features determined by the same characteristics of the systems from the fluid dynamics, while this paper is employed as the first attempt to exploit a tip of the iceberg of these common features.

Specifically, we list some axiomatized conditions, by which we get a unified proof for blow-up criteria of Ladyzhenskaya--Prodi--Serrin type and Beale--Kato--Majda type in Besov space of the systems satisfying these conditions.
Without such restrictions, these systems may have some blow-up solutions. Recently Tao \cite{Tao} proved that in the averaged Navier-Stokes equations, it is assumed that the nonlinear term satisfies all the classical harmonic analysis estimates and the
fundamental cancellation property, so as to construct the exploding solution.
Throughout this paper, we use $(\mathscr P)$ to denote the Cauchy problem of an arbitrary PDE system in $\mathbb R^d$.
The problem $(\mathscr P)$ considered in this paper is supposed to satisfies (H1)--(H3) and at least one of (H4) and (H4'):
\begin{itemize}
  \item [(H1)] \textbf{(Local existence)} For any $u_0\in H^1(\mathbb R^d)$ (if some other conditions, such as $\text{div}\,u=0$, is included in $(\mathscr P)$, then $u_0$ is required to satisfy the same conditions), there exists a unique solution $u$ to $(\mathscr P)$ on $\mathbb R^d\times(0,T_*)$, with initial data $u_0$, and for any $T<T_*$, there hold
      $$
       u\in C([0,T]; H^1(\mathbb R^d))\cap L^2(0,T; H^2(\mathbb R^d)),\quad \partial_tu\in L^2(0,T; L^2(\mathbb R^d)),
      $$
     where $T_*$ is a positive number depending only on the upper bound of the initial norm $\|u_0\|_{H^1(\mathbb R^d)}$.
  \item [(H2)] \textbf{(Joint property)} For any two solutions $u_1$ and $u_2$ to $(\mathscr P)$ on $\mathbb R^d\times(0,T_1)$ and on $\mathbb R^d\times(T_1,T_2)$, respectively, with initial data $u_0$ and $u_1(T_1)$, respectively, such that $u_1\in C([0,T_1]; H^1(\mathbb R^d))$ and $u_2\in C([T_1,T_2); H^1(\mathbb R^d))$, then the joint function $u$ defined as
      $$
      u:=\left\{
      \begin{array}{l}
      u_1(x,t),\quad t\in[0,T_1),\\
      u_2(x,t),\quad t\in[T_1, T_2),
      \end{array}
      \right.
      $$
      is a solution to $(\mathscr P)$ on $\mathbb R^d\times(0,T_2)$, with initial data $u_0$.
      \item [(H3)] \textbf{(Basic energy)} For any solution $u$ to $(\mathscr P)$ on $\mathbb R^d\times(0,T)$, it satisfies
      $$
      \sup_{0\leq s\leq t}\|u\|_{L^2(\mathbb R^d)}\leq \mathcal M(t),
      $$
      for any $t\in(0,T)$, where $\mathcal M$ is a continuous nondecreaing function on $[0,\infty)$, determined by the initial data.
  \item [(H4)] \textbf{(First energy inequality)} For any solution $u$ to $(\mathscr P)$ on $\mathbb R^d\times(0,T)$, it satisfies the energy inequality
      $$
      \frac{d}{dt}\|\nabla u\|_{L^2(\mathbb R^d)}^2+c_1\|\Delta u\|_{L^2(\mathbb R^d)}^2\leq c_1'\int_{\mathbb R^d}|u|^2|\nabla u|^2 dx,
      $$
      for $t\in(0,T)$, where $c_1$ and $c_1'$ are two positive constants.
  \item [(H4')] \textbf{(First energy inequality')} For any solution $u$ to $(\mathscr P)$ on $\mathbb R^d\times(0,T)$, it satisfies the energy inequality
      $$
      \frac{d}{dt}\|\nabla u\|_{L^2(\mathbb R^d)}^2+c_2\|\Delta u\|_{L^2(\mathbb R^d)}^2\leq c_2'\|\nabla u\|_{L^3(\mathbb R^d)}^3,
      $$
      for $t\in(0,T)$, where $c_2$ and $c_2'$ are two positive constants.
\end{itemize}
The following hypothesis on the second energy may also be used in some specific case (the case that $s=1$ in Theorem \ref{thm2}, below):
\begin{itemize}
  \item [(H5)] \textbf{(Second energy inequality)}
  For any solution $u$ to $(\mathscr P)$ on $\mathbb R^d\times(0,T)$, it satisfies the energy inequality
      $$
      \frac{d}{dt}\|\Delta u\|_{L^2(\mathbb R^d)}^2+c_3\|\nabla\Delta u\|_{L^2(\mathbb R^d)}^2\leq c_3'\int_{\mathbb R^d}(|\nabla u||\nabla^2u|^2+|\nabla u|^4) dx,
      $$
      for $t\in(0,T)$, where $c_3$ and $c_3'$ are two positive constants.
\end{itemize}

\begin{remark}
It can be verified that both the Navier-Stokes system and the MHD system meet the above conditions (H1)-(H4),  (H4') and (H5).
\end{remark}

The first result of this paper is the following
\begin{theorem}
  \label{thm1}
  Given a positive time $T_*$, and let $(\mathscr P)$ be the Cauchy
  problem of an arbitrary PDE system, such that it satisfies the hypothesises (H1)--(H4). Let $u$ be a solution to $(\mathscr P)$ on $\mathbb R^d\times(0,T_*)$, satisfying
  \begin{eqnarray*}
  u\in L^q(0,T_*; B^{-s}_{p,\infty}(\mathbb R^d)),
  \end{eqnarray*}
  for some constants $s, p$ and $q$, such that
  $$
  \frac2q+\frac dp=1-s, \quad \mbox{with } p\in\left(\frac{d}{1-s},\infty\right]\mbox{and } s\in(0,1).
  $$
  Then the solution $u$ can be extended uniquely beyond $T_*$.
\end{theorem}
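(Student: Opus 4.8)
The plan is the standard strategy for blow‑up criteria: reduce the extension question to an a priori $H^{1}$ bound, and then close that bound through the energy inequality (H4), where the new multiplicative Besov interpolation inequalities do all the work.

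First I would record the continuation principle coming from (H1) and (H2). By (H1) the lifespan of the local solution depends only on an upper bound for the $H^{1}$ norm of the data, and (again by (H1)) $u\in C([0,T];H^{1})$ for every $T<T_{*}$; consequently, if we can prove
\[
\sup_{0\le t<T_{*}}\|\nabla u(t)\|_{L^{2}(\mathbb R^{d})}<\infty
\]
(the $L^{2}$ norm being controlled already by (H3)), then picking $t_{0}<T_{*}$ with $T_{*}-t_{0}$ less than the uniform lifespan attached to $\sup_{t<T_{*}}\|u(t)\|_{H^{1}}$, solving $(\mathscr P)$ with data $u(t_{0})$ yields a solution reaching past $T_{*}$, which by the uniqueness in (H1) and the joint property (H2) coincides with and prolongs $u$. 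So the whole problem is the displayed a priori bound.

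For $t<T_{*}$, hypothesis (H4) gives
\[
\frac{d}{dt}\|\nabla u\|_{L^{2}}^{2}+c_{1}\|\Delta u\|_{L^{2}}^{2}\le c_{1}'\int_{\mathbb R^{d}}|u|^{2}|\nabla u|^{2}\,dx,
\]
and everything comes down to absorbing the right‑hand side. Here I would invoke the multiplicative interpolation inequality of Besov type (the key new input of the paper). Under the stated range $\frac{2}{q}+\frac{d}{p}=1-s$, $s\in(0,1)$, $p\in(\frac{d}{1-s},\infty]$ — which forces $q\in(2,\infty)$ and $s+\frac{d}{p}=1-\frac{2}{q}\in(0,1)$ — scaling identifies the natural candidate as
\[
\int_{\mathbb R^{d}}|u|^{2}|\nabla u|^{2}\,dx\le C\,\|u\|_{B^{-s}_{p,\infty}}^{2}\,\|\nabla u\|_{L^{2}}^{4/q}\,\|\Delta u\|_{L^{2}}^{2-4/q},
\]
and since $2-\frac{4}{q}<2$, Young's inequality with exponents $\frac{q}{q-2}$ and $\frac{q}{2}$ turns this into $c_{1}'\int|u|^{2}|\nabla u|^{2}\,dx\le\frac{c_{1}}{2}\|\Delta u\|_{L^{2}}^{2}+C\|u\|_{B^{-s}_{p,\infty}}^{q}\|\nabla u\|_{L^{2}}^{2}$ (the exponent $q$ matching exactly the $L^{q}$‑in‑time hypothesis is a good consistency check; if (H4) is replaced by (H4'), the same scheme runs with the analogous interpolation bound for $\|\nabla u\|_{L^{3}}^{3}$). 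Inserting this back,
\[
\frac{d}{dt}\|\nabla u\|_{L^{2}}^{2}+\frac{c_{1}}{2}\|\Delta u\|_{L^{2}}^{2}\le C\,\|u\|_{B^{-s}_{p,\infty}}^{q}\,\|\nabla u\|_{L^{2}}^{2},\qquad t<T_{*},
\]
and since $\|u\|_{B^{-s}_{p,\infty}}^{q}\in L^{1}(0,T_{*})$ by assumption, Gr\"onwall's inequality yields $\sup_{t<T_{*}}\|\nabla u(t)\|_{L^{2}}<\infty$ (and, incidentally, $\Delta u\in L^{2}(0,T_{*};L^{2})$). Together with (H3) this is the a priori $H^{1}$ bound, and the first step then delivers the extension, which is unique by (H1).

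The heart of the matter is therefore the interpolation inequality. I expect its proof to run through a Littlewood--Paley/Bony paraproduct decomposition of $u\otimes\nabla u$ (note $\int|u|^{2}|\nabla u|^{2}=\|u\otimes\nabla u\|_{L^{2}}^{2}$). The piece with the low frequencies of $u$ in front is the easy one: Bernstein's inequality gives $\|S_{j-1}u\|_{L^{\infty}}\lesssim\|u\|_{B^{-s}_{p,\infty}}\sum_{k<j}2^{k(s+d/p)}\lesssim 2^{j(1-2/q)}\|u\|_{B^{-s}_{p,\infty}}$ — here $s+\frac{d}{p}>0$ is what makes the geometric series sum to its last term, and $1-\frac{2}{q}\in(0,1)$ is what lets $\|\nabla u\|_{\dot H^{1-2/q}}$ be interpolated strictly between $\|\nabla u\|_{L^{2}}$ and $\|\Delta u\|_{L^{2}}$ — so this contribution is directly dominated by the right‑hand side. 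The genuinely hard part, and the main obstacle, is the remaining paraproduct together with the resonant term: there the \emph{high} frequencies of $u$ enter, where the $B^{-s}_{p,\infty}$ norm is useless (the blocks $\Delta_{j}u$ are only bounded by $2^{js}\|u\|_{B^{-s}_{p,\infty}}$, which grows since $s>0$). I expect these to be controlled by splitting $u$ at a dyadic frequency $N$, estimating the low part through $\|u\|_{B^{-s}_{p,\infty}}$ (and possibly through $\|u\|_{L^{2}}$, available from (H3), at the very lowest frequencies) at the cost of a power of $N$, estimating the high part through the decay of $\|\Delta_{j}u\|_{L^{2}}$ coming from $\Delta u\in L^{2}$, and then optimizing in $N$ — which is precisely what manufactures the multiplicative form of the inequality. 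Getting all the exponents to land correctly, and handling the endpoint $p=\infty$, is where essentially all the difficulty lies.
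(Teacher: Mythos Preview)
Your high-level strategy is exactly the paper's: reduce to an a priori $H^{1}$ bound via the continuation principle coming from (H1)--(H2), plug into (H4), estimate $\int |u|^{2}|\nabla u|^{2}$ by a multiplicative Besov interpolation inequality with the $\|\Delta u\|_{L^{2}}$ exponent strictly below $2$, absorb via Young, and Gr\"onwall. The inequality you write, with exponents $4/q$ and $2-4/q$, matches the paper's $\|u\|_{B^{-s}_{\infty,\infty}}^{2}\|u\|_{H^{1}}^{2(1-s_{1})}\|u\|_{H^{2}}^{2s_{1}}$ (where $s_{1}=s+d/p=1-2/q$) after the paper's first step, which is to reduce to $p=\infty$ via the embedding $B^{-s}_{p,\infty}\hookrightarrow B^{-s-d/p}_{\infty,\infty}$.

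Where you diverge is in the mechanism for the interpolation inequality itself. You propose a Bony paraproduct decomposition of $u\otimes\nabla u$, with the resonant/high-high terms as the hard part. The paper deliberately avoids this (indeed, one of its advertised features is that no Bony decomposition or commutator estimate is needed). Instead, having reduced to $p=\infty$, it chooses H\"older exponents $p_{s}\in(2+\tfrac{2}{s},\,2+\tfrac{4}{s})$ and $q_{s}\in(2,\,2+\tfrac{2}{1+s})$ with $\tfrac{2}{p_{s}}+\tfrac{2}{q_{s}}=1$, writes $\int|u|^{2}|\nabla u|^{2}\le\|u\|_{p_{s}}^{2}\|\nabla u\|_{q_{s}}^{2}$, and proves two \emph{single-function} interpolation inequalities $\|f\|_{p_{s}}\lesssim\|f\|_{B^{-s}_{\infty,\infty}}^{1-2/p_{s}}\|f\|_{H^{1}}^{\cdots}\|f\|_{H^{2}}^{\cdots}$ and $\|\nabla f\|_{q_{s}}\lesssim\|f\|_{B^{-s}_{\infty,\infty}}^{1-2/q_{s}}\|\nabla f\|_{2}^{\cdots}\|\nabla f\|_{H^{1}}^{\cdots}$. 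Each of these is obtained by the elementary high/low frequency split you describe at the end: bound $\|\Delta_{k}f\|_{p}$ via Bernstein by either $2^{k\alpha}\|\nabla f\|_{2}^{2/p}\|f\|_{B^{-s}_{\infty,\infty}}^{1-2/p}$ or $2^{-k\beta}\|\Delta f\|_{2}^{2/p}\|f\|_{B^{-s}_{\infty,\infty}}^{1-2/p}$ (with $\alpha,\beta>0$ in the stated range), sum, and optimize the cutoff $k_{0}$. Multiplying the two bounds gives exactly the product form. Your paraproduct route would work, but the paper's factor-then-interpolate approach is considerably shorter and sidesteps the ``genuinely hard'' resonant term altogether; on the other hand, a direct bilinear argument like yours would presumably be more robust under weaker structural hypotheses on the nonlinearity.
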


  Note that the hypothesis (H4') is stronger than (H4). Therefore, one can expect that if the system $(\mathscr P)$ satisfies the stronger hypothesis (H4') instead of (H4), then a better result than Theorem \ref{thm1} should hold. In fact, we have the following

\begin{theorem}
  \label{thm2}
  Given a positive time $T_*$, and let $(\mathscr P)$ be the Cauchy
  problem of an arbitrary PDE system, such that it satisfies the hypothesises (H1)--(H3) and (H4'). Let $u$ be a solution to $(\mathscr P)$ on $\mathbb R^d\times(0,T_*)$, satisfying
  \begin{eqnarray*}
  u\in L^q(0,T_*; B^{s}_{p,\infty}(\mathbb R^d)),
  \end{eqnarray*}
  for some constants $s, p$ and $q$, such that
  $$
  \frac2q+\frac dp=1+s, \quad \mbox{with } p\in\left(\frac{d}{1+s},\infty\right]\mbox{and } s\in(-1,1].
  $$
  Then, for the case that $(p,s)\not=(\infty,1)$, the solution $u$ can be extended uniquely beyond $T_*$. While for the case that $(p,s)=(\infty,1)$, the solution $u$ can also be extended uniquely beyond $T_*$, if we have further that (H5) holds, and $d=2,3$.
\end{theorem}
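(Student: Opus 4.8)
\emph{Sketch of the strategy.} The plan is the classical blow-up-criterion scheme: derive an a priori bound for $\|\nabla u(t)\|_{L^2(\mathbb{R}^d)}$ on $[0,T_*)$ and then continue the solution by the local theory, the novelty being that the Gagliardo--Nirenberg inequality is replaced by the multiplicative interpolation inequalities of Besov type developed in this paper. \textbf{Reduction to an a priori bound.} For any $T<T_*$, (H1) gives $u\in C([0,T];H^1(\mathbb{R}^d))\cap L^2(0,T;H^2(\mathbb{R}^d))$, and the local existence time it furnishes depends only on an upper bound of the $H^1$-norm of the datum. Hence it suffices to prove
\[
\sup_{0\le t<T_*}\|\nabla u(t)\|_{L^2(\mathbb{R}^d)}^2+\int_0^{T_*}\|\Delta u\|_{L^2(\mathbb{R}^d)}^2\,dt=:M<\infty .
\]
Granting this, (H3) bounds $\|u(t_0)\|_{H^1(\mathbb{R}^d)}\le\sqrt M+\mathcal{M}(T_*)$ for every $t_0<T_*$, so the associated local existence time $\tau>0$ is independent of $t_0$; taking $t_0\in(T_*-\tau,T_*)$, solving $(\mathscr P)$ from the datum $u(t_0)$ on $[t_0,t_0+\tau)$ (which contains $T_*$), and gluing with $u|_{[0,t_0]}$ via the joint property (H2) produces a solution of $(\mathscr P)$ on $[0,t_0+\tau)\supsetneq[0,T_*)$; it is the unique such continuation by the uniqueness part of (H1). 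Everything is thus reduced to the displayed estimate.

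\textbf{The interpolation inequality.} The analytic core, and the step I expect to be the main obstacle, is a multiplicative inequality of the form
\[
\|\nabla u\|_{L^3(\mathbb{R}^d)}^3\le C\,\|u\|_{\dot B^{s}_{p,\infty}(\mathbb{R}^d)}\,\|\nabla u\|_{L^2(\mathbb{R}^d)}^{2/q}\,\|\Delta u\|_{L^2(\mathbb{R}^d)}^{2-2/q},
\]
valid for the exponents in the statement with $(p,s)\neq(\infty,1)$ (for the inhomogeneous space $B^{s}_{p,\infty}$ one adds on the right a harmless lower-order term controlled by $\|u\|_{L^2}$ and $\|\nabla u\|_{L^2}$, absorbable via (H3)). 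The powers on the right are forced by requiring homogeneity of degree $3$ in $u$ together with invariance under $u\mapsto u(\lambda\,\cdot)$ — which is exactly the constraint $\tfrac2q+\tfrac dp=1+s$; the hypothesis $p>\tfrac d{1+s}$ makes $q<\infty$, and $(p,s)\neq(\infty,1)$ makes $q>1$, so that $0<2-2/q<2$ and, crucially, the dyadic series below converges. To prove it one splits $\nabla u$ into Littlewood--Paley blocks and estimates the high-frequency ones through $\|\Delta u\|_{L^2}$ and the low/intermediate ones through $\|\nabla u\|_{L^2}$ (and $\|u\|_{L^2}$) via Bernstein's inequalities, the hypothesis $u\in\dot B^{s}_{p,\infty}$ entering only through the uniform-in-$j$ bound on the blocks; the summation converges only at a borderline rate, and at $(p,s)=(\infty,1)$ it diverges logarithmically, which is precisely what forces the separate treatment of that case.

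\textbf{Energy estimate: the generic case $(p,s)\neq(\infty,1)$.} On $[0,T]$ with $T<T_*$, feed the above inequality into the first energy inequality (H4'). Since $2-2/q<2$, Young's inequality (with conjugate exponent $q<\infty$) absorbs the factor $\|\Delta u\|_{L^2}^{2-2/q}$ into $c_2\|\Delta u\|_{L^2}^2$, and bounding the $\|u\|_{L^2}$-factors by $\mathcal{M}(T_*)$ via (H3) yields
\[
\frac{d}{dt}\|\nabla u\|_{L^2(\mathbb{R}^d)}^2+\frac{c_2}{2}\|\Delta u\|_{L^2(\mathbb{R}^d)}^2\le C\big(1+\|u\|_{B^{s}_{p,\infty}(\mathbb{R}^d)}^{q}\big)\big(1+\|\nabla u\|_{L^2(\mathbb{R}^d)}^2\big).
\]
Since $t\mapsto\|u(t)\|_{B^{s}_{p,\infty}(\mathbb{R}^d)}\in L^q(0,T_*)$ by hypothesis, Gronwall's inequality furnishes the bound $M$ of the first step, depending only on $\|\nabla u_0\|_{L^2}$, $\mathcal{M}(T_*)$, $c_2,c_2'$ and $\|u\|_{L^q(0,T_*;B^{s}_{p,\infty}(\mathbb{R}^d))}$, uniformly in $T<T_*$; letting $T\uparrow T_*$ closes this case.

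\textbf{The endpoint $(p,s)=(\infty,1)$.} Here $u\in L^1(0,T_*;B^{1}_{\infty,\infty}(\mathbb{R}^d))$ and $2-2/q=0$, so the scheme above would need $\|\nabla u\|_{L^\infty}$ controlled by $\|u\|_{B^{1}_{\infty,\infty}}$, which fails by a logarithm — this is exactly why (H5) and the restriction $d=2,3$ are imposed. I would run a coupled estimate for $X(t):=e+\|\nabla u(t)\|_{L^2(\mathbb{R}^d)}^2+\|\Delta u(t)\|_{L^2(\mathbb{R}^d)}^2$, adding (H4') and (H5): the nonlinear terms $\|\nabla u\|_{L^3}^3$, $\int_{\mathbb{R}^d}|\nabla u||\nabla^2u|^2\,dx$ and $\int_{\mathbb{R}^d}|\nabla u|^4\,dx$ are dominated by $\|\nabla u\|_{L^\infty}\big(\|\nabla u\|_{L^2}^2+\|\Delta u\|_{L^2}^2\big)$ plus lower-order contributions which, \emph{precisely because $d\le 3$}, Gagliardo--Nirenberg renders absorbable into the dissipation $c_3\|\nabla\Delta u\|_{L^2}^2$ after Young's inequality. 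Inserting a logarithmic interpolation inequality bounding $\|\nabla u\|_{L^\infty}$ by $\|u\|_{B^{1}_{\infty,\infty}}$ times a logarithm of a higher Sobolev norm of $u$ — that norm being itself controlled by $X$ together with the (already-absorbed) dissipation — leads, after this bookkeeping, to a differential inequality for $X$ that closes by a (possibly iterated) logarithmic Gronwall argument, using $\|u\|_{B^{1}_{\infty,\infty}}\in L^1(0,T_*)$. One thus gets $\sup_{[0,T_*)}X<\infty$, in particular the bound $M$ above, and the continuation argument of the first step applies verbatim. In summary, the hard part is the Besov-type interpolation inequality (and, at the endpoint, its logarithmic analogue together with closing the coupled inequality in $d=2,3$); the reduction and the Gronwall steps are routine once the interpolation inequalities are in hand.
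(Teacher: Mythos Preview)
Your reduction to an a priori $H^1$ bound and the generic case $(p,s)\neq(\infty,1)$ are essentially the paper's argument. The paper first reduces to $p=\infty$ via the embedding $B^{s}_{p,\infty}\hookrightarrow B^{s-d/p}_{\infty,\infty}$ and then applies its Lemma~\ref{lemL3} with $q=3$ to get $\|\nabla u\|_3^3\le C\|u\|_{B^{s}_{\infty,\infty}}\|\nabla u\|_2^{1+s}\|\nabla u\|_{H^1}^{1-s}$, which is your interpolation inequality specialized to $p=\infty$ (where $2/q=1+s$); the subsequent Young/Gronwall steps are identical.

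At the endpoint $(p,s)=(\infty,1)$ your sketch has the right ingredients but glosses over the actual closure, and as written it does not quite work. A pointwise-in-time logarithmic inequality for $\|\nabla u\|_\infty$ involves $\log(e+\|\nabla\Delta u\|_2)$, which is only controlled through the dissipation in $L^2_t$, not pointwise; and the $\|\nabla u\|_4^4$ term from (H5) is \emph{not} of the form $\|\nabla u\|_\infty\cdot X$ plus something absorbable --- in $d=3$ any Gagliardo--Nirenberg route leaves a genuinely superlinear power of $X$, so a differential inequality $X'\le C\|u\|_{B^1_{\infty,\infty}}(\log Y)\,X$ with $Y$ controlled by $X$ plus dissipation does not close by a standard (or iterated) log-Gronwall. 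The paper's fix is specific: it uses a \emph{time-integrated} logarithmic inequality (Lemma~\ref{lemL5}), restricts to a short interval $[T_*-\delta,T_*)$ on which $\int\|u\|_{B^1_{\infty,\infty}}\le\varepsilon$, and works with the cumulative quantities $f_1(t)=\|\nabla u\|_2^2+c_2\int\|\Delta u\|_2^2$ and $f_2(t)=\|\Delta u\|_2^2+c_3\int\|\nabla\Delta u\|_2^2$. Gronwall on (H4') and (H5) separately then produces a multiplicative factor $(1+\int\|\nabla\Delta u\|_2)^{C\varepsilon}\le (f_2+1)^{C\varepsilon}$, and with $\varepsilon$ small this yields a coupled \emph{algebraic} system $F_1\le CF_2^{1/6}$, $F_2\le CF_2^{11/12}$ (for $d=3$) that closes. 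This small-interval/algebraic-closure mechanism is the missing idea in your endpoint sketch.
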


\begin{remark}
(1) It should be noticed that if considering the Cauchy problem to the MHD equations in three dimensions, Theorem 2 has
already been obtained in \cite{CMZ2}; however, comparing with their work, one of our biggest features here is that we
need neither the concrete structure of the equations nor the Bony decomposition and commutator estimates.

(2) Since $L^p(\R^d)\varsubsetneq B^{0}_{p,\infty}(\R^d)$ for $s=0$, the classic Ladyzhenskaya--Prodi--Serrin criterion
is included in Theorem \ref{thm2}. Theorem \ref{thm1}-\ref{thm2} still hold if replacing the
inhomogeneous Besov spaces by the corresponding
homogeneous ones. In fact, by checking the proof, it suffices to verify that
Lemma \ref{lemL1} and Lemma \ref{lemL3} continue to hold if replacing the inhomogeneous Besov spaces there by
the homogeneous ones. Thanks to this and noticing that $\dot{B}_{p,\infty}^{s}(\R^d)\subset
B_{p,\infty}^{s}(\R^d) $ for $s<0$ and $\dot{B}_{p,\infty}^s(\R^d)\cap
L^p(\R^d)=B_{p,\infty}^{s}(\R^d)$ for $s> 0$ , the condition for $u$ in Theorem \ref{thm2} can be relaxed to
  \begin{eqnarray*}
  \left\{
    \begin{array}{lr}
      u\in L^q(0,T_*; B^{s}_{p,\infty}(\mathbb R^d)), & -1<s\leq0, \\
      u\in L^q(0,T_*; \dot{B}^{s}_{p,\infty}(\mathbb R^d)), & 0<s\leq1,
    \end{array}
  \right.
  \end{eqnarray*}
with $p, q,$ and $s$ obeying the relations stated in Theorem \ref{thm2}. In particular, in the case that $(p,q,s)=(\infty, 1, 1)$, the relaxed condition reduces to $\nabla u\in L^{1}(0,T_*; \dot{B}^{0}_{\infty,\infty}(\mathbb R^d))$, which is of the well-known Beale--Kato--Majda type criteria.

(3) Note that the endpoint case $(p,q,s)=(\infty,\infty,-1)$, i.e.\,the case that $u\in L^\infty(0,T; B^{-1}_{\infty,\infty})$, is excluded in both Theorem \ref{thm1} and Theorem \ref{thm2}. In fact, through the regularity
of the Leray-Hopf weak solutions to the three dimensional Navier-Stoke equation was obtained under the condition that $u\in C((0,T]; B^{-1}_{\infty,\infty})$, see \cite{ches}, it is still unknown if it can be relaxed to $u\in L^\infty(0,T; B^{-1}_{\infty,\infty})$.
\end{remark}

The rest of the paper is organized as follows. In section 2, we introduce the definition of Besov space and several new interpolation inequalities of Besov type, which are our main technical tools.
In section 3, we prove Theorem \ref{thm1} and Theorem \ref{thm2} under the abstract assumptions (H1)--(H5).

\section{Preliminaries}

\subsection{Littlewood-Paley decomposition}

 Let us recall some basic facts on Littlewood-Paley theory(see \cite{Che}  for more details). Choose two nonnegative radial functions $\chi,\phi\in { \mathcal{S}}(R^n)$ supported respectively in
$\{\xi\in R^n,|\xi|\le \frac{4}{3}\}$ and $\{\xi\in R^n, \frac{3}{4}\le |\xi|\le \frac{8}{3}\}$ such that for any $\xi\in R^n$,
$$
\chi(\xi)+\sum_{j\ge 0}\phi(2^{-j}\xi)=1.
$$
The frequency localization operator $\Delta_j$ and $S_j$ are defined by
\begin{eqnarray*}
&&\Delta_j f=\phi(2^{-j}D)f=2^{nj}\int_{R^n}h(2^{j}y)f(x-y)dy,~~~~\mbox{for}~~ j\ge 0,\\
&&S_j f=\chi(2^{-j}D)f=\sum_{-1\le k\le j-1}\Delta_{k}f=2^{nj}\int_{R^n}\tilde{h}(2^{j}y)f(x-y)dy,\\
&&\Delta_{-1}f=S_0f, ~~\Delta_j f=0~~ \mbox{for}~~ j\le -2,
\end{eqnarray*}
where $h={ \mathcal{F}}^{-1}\phi$, $\tilde{h}={ \mathcal{F}}^{-1}\chi$.  With this choice of $\phi$, it is easy to verify that
\begin{eqnarray}\label{eq:A.1}
\Delta_j\Delta_kf=0,~~ {\rm if}~~|j-k|\geq 2;~~\Delta_j(S_{k-1}{g}\Delta_kf)=0,~~ {\rm if}~~|j-k|\geq 5.
\end{eqnarray}

In terms of $\Delta_j$, the norm of the inhomogeneous Besov space $B^s_{p,q}$ for $s\in R,$ and $p,q\geq 1$ is defined by
\begin{eqnarray*}
\|f\|_{B^s_{p,q}}\doteq\big\|\{2^{js}\|\Delta_jf\|_p\}_{j\ge -1}\big\|_{\ell^q},
\end{eqnarray*}
and
\begin{eqnarray*}
\|f\|_{B^s_{p,\infty}}\doteq\sup_{j\ge -1}\{2^{js}\|\Delta_jf\|_p\},
\end{eqnarray*}

We will constantly use the following Bernstein's inequality \cite{Che}.
\begin{lemma}[Bernstein inequality]\label{lem:Berstein}
Let $c\in (0,1)$ and $R>0$. Assume that $1\leq p\leq q\leq \infty$ and $f\in L^p(R^n)$. Then
\begin{eqnarray*}
&&{\rm supp} \hat{f}\subset\big\{|\xi|\leq R\big\}\Rightarrow \|\partial^{\alpha}f\|_{q}\leq CR^{|\alpha|+n(\frac1p-\frac1q)}\|f\|_{p},\\
&&{\rm supp} \hat{f}\subset\big\{cR \leq |\xi|\leq R\big\}\Rightarrow \|f\|_{p}\leq CR^{-|\alpha|}\sup_{|\beta|=|\alpha|}\|\partial^{\beta}f\|_{p},
\end{eqnarray*}
where the constant $C$ is independent of $f$ and $R$.
\end{lemma}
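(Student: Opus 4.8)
\noindent My plan is to derive both inequalities from one principle --- realizing (derivatives of) $f$ as the convolution of $f$ against an explicit Schwartz kernel whose $L^r$ norms scale by a fixed power of $R$ --- and then applying Young's convolution inequality. I would fix once and for all a function $\psi\in C_c^\infty(\mathbb{R}^n)$ with $\psi\equiv 1$ on $\{|\xi|\le 1\}$ and $\operatorname{supp}\psi\subset\{|\xi|\le 2\}$, and set $g:=\mathcal{F}^{-1}\psi\in\mathcal{S}(\mathbb{R}^n)$. When $\widehat f$ is supported in $\{|\xi|\le R\}$ one has $\psi(\cdot/R)\widehat f=\widehat f$, so $f=\phi_R* f$ with $\phi_R(x)=R^n g(Rx)$, and hence $\partial^\alpha f=(\partial^\alpha\phi_R)* f$ where $\partial^\alpha\phi_R(x)=R^{n+|\alpha|}(\partial^\alpha g)(Rx)$.

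For the first inequality I would define $r\in[1,\infty]$ by $\tfrac1r=1+\tfrac1q-\tfrac1p$ (this lies in $[0,1]$ precisely because $1\le p\le q\le\infty$) and apply Young's inequality $\|h_1* h_2\|_q\le\|h_1\|_r\|h_2\|_p$ to get $\|\partial^\alpha f\|_q\le\|\partial^\alpha\phi_R\|_r\|f\|_p$. A change of variables gives $\|\partial^\alpha\phi_R\|_r=R^{|\alpha|+n(1-1/r)}\|\partial^\alpha g\|_r=R^{|\alpha|+n(1/p-1/q)}\|\partial^\alpha g\|_r$, which is exactly the asserted power of $R$. To see that the constant is independent of $(p,q)$, I would use that $\partial^\alpha g\in\mathcal{S}(\mathbb{R}^n)$ gives $\|\partial^\alpha g\|_r\le\|\partial^\alpha g\|_1^{1/r}\|\partial^\alpha g\|_\infty^{1-1/r}\le\max\{\|\partial^\alpha g\|_1,\|\partial^\alpha g\|_\infty\}$ for every $r\in[1,\infty]$.

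For the second inequality I would write $m:=|\alpha|$ and start from the multinomial identity $|\xi|^{2m}=\sum_{|\beta|=m}\binom{m}{\beta}\xi^{2\beta}$. After fixing $\theta\in C_c^\infty(\mathbb{R}^n)$ supported in $\{c/2\le|\xi|\le 2\}$ with $\theta\equiv 1$ on $\{c\le|\xi|\le 1\}$, and setting $\Theta_\beta(\xi):=\theta(\xi)|\xi|^{-2m}(i\xi)^\beta\in C_c^\infty(\mathbb{R}^n)$ (well defined since $\theta$ vanishes near $0$), I would use $\theta(\xi/R)\widehat f=\widehat f$ on the annulus $\{cR\le|\xi|\le R\}$ together with $\widehat{\partial^\beta f}=(i\xi)^\beta\widehat f$ to rewrite, after collecting the powers of $R$,
\[
\widehat f(\xi)=(-1)^m\sum_{|\beta|=m}\binom{m}{\beta}R^{-m}\,\Theta_\beta(\xi/R)\,\widehat{\partial^\beta f}(\xi).
\]
Taking the inverse Fourier transform turns this into $f=(-1)^m\sum_{|\beta|=m}\binom{m}{\beta}R^{-m}\,\Psi_{\beta,R}* \partial^\beta f$ with $\Psi_{\beta,R}(x)=R^n(\mathcal{F}^{-1}\Theta_\beta)(Rx)$; since $\|\Psi_{\beta,R}\|_1=\|\mathcal{F}^{-1}\Theta_\beta\|_1<\infty$, Young's inequality $\|h_1* h_2\|_p\le\|h_1\|_1\|h_2\|_p$ would give $\|f\|_p\le C R^{-m}\sup_{|\beta|=m}\|\partial^\beta f\|_p$ with $C=C(n,m,c)$.

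I do not expect a real obstacle here: the lemma is classical, and the argument reduces to two uses of Young's inequality. The only points requiring attention are the bookkeeping of the powers of $R$ under the rescalings $\psi\mapsto\psi(\cdot/R)$ and $\Theta_\beta\mapsto\Theta_\beta(\cdot/R)$; the uniformity of the constants in $(p,q)$, which the interpolation bound $\|\cdot\|_r\le\max\{\|\cdot\|_1,\|\cdot\|_\infty\}$ on $\mathcal{S}$ handles; and, in the second estimate, the fact that $\theta$ and hence $C$ must depend on the fixed ratio $c$ --- this is harmless, since the statement only requires independence of $f$ and $R$. (Throughout, $\alpha$ is an arbitrary multi-index and $C$ may depend on $n$ and $|\alpha|$.)
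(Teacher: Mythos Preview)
Your argument is correct and is precisely the standard proof of Bernstein's inequality: reproducing $f$ (respectively $\partial^\alpha f$) via convolution against a rescaled Schwartz kernel built from a smooth Fourier cutoff adapted to the ball or annulus, and then invoking Young's inequality, with the scaling of the kernel producing the stated powers of $R$. The bookkeeping you outline---the change of variables for $\|\partial^\alpha\phi_R\|_r$, the multinomial identity $|\xi|^{2m}=\sum_{|\beta|=m}\binom{m}{\beta}\xi^{2\beta}$, and the $L^1$-invariance of $\Psi_{\beta,R}$---is accurate, and your remarks on the dependence of the constant on $n$, $|\alpha|$, and $c$ (but not on $f$, $R$, $p$, $q$) are on point.

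There is nothing to compare against in the paper itself: Lemma~\ref{lem:Berstein} is stated without proof and simply attributed to \cite{Che}. Your write-up is essentially the argument one finds there (or in any standard treatment of Littlewood--Paley theory), so in effect you have supplied the proof the paper delegates to the reference.
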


\subsection{New interpolation inequalities of Besov type}

\begin{lemma}\label{lemL1}
  Let $s\in(0,\infty)$. Then, for any nonzero function $f\in H^2\cap B_{\infty,\infty}^{-s}$, the following interpolation inequality holds
  $$
    \|f\|_p\leq C_{n,p,s}\|f\|_{B^{-s}_{\infty,\infty}}^{1-\frac2p}\|f\|_{H^1} ^{\frac4p-s(1-\frac2p)}\|f\|_{H^2}^{s(1-\frac2p)-\frac2p},
  $$
  for any $p\in\left(2+\frac2s,2+\frac4s\right)$,
  where
  \begin{eqnarray*}
  &C_{n,p,s}=C_n\max\left\{\frac{1}{2^{s(1-\frac2p)-\frac2p}-1},\frac{1} {1-2^{s(1-\frac2p)-\frac4p}}\right\},
  \end{eqnarray*}
  for a positive constant $C_n$ depending only on $n$.
\end{lemma}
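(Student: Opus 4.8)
The plan is to run the standard Littlewood--Paley splitting $f=\sum_{j\ge-1}\Delta_j f$ and estimate the $L^p$ norm by the triangle inequality, $\|f\|_p\le\sum_{j\ge-1}\|\Delta_j f\|_p$, choosing a cutoff frequency $2^N$ to be optimized at the end. For the low frequencies $j\le N$ I would interpolate $\|\Delta_j f\|_p$ between $\|\Delta_j f\|_\infty$ and $\|\Delta_j f\|_2$ via the elementary bound $\|\Delta_j f\|_p\le\|\Delta_j f\|_\infty^{1-2/p}\|\Delta_j f\|_2^{2/p}$, then bound $\|\Delta_j f\|_\infty\le 2^{sj}\|f\|_{B^{-s}_{\infty,\infty}}$ from the definition of the Besov norm, and $\|\Delta_j f\|_2\le\|f\|_{L^2}\le\|f\|_{H^1}$. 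This produces a geometric-type series $\sum_{j\le N}2^{sj(1-2/p)}$, whose growth as $N\to\infty$ is controlled because $s(1-2/p)>0$; summing gives a constant times $2^{sN(1-2/p)}\|f\|_{B^{-s}_{\infty,\infty}}^{1-2/p}\|f\|_{H^1}^{2/p}$, and this is where the first term inside the max in $C_{n,p,s}$, namely $1/(2^{s(1-2/p)-2/p}-1)$, will come from once $N$ is chosen. For the high frequencies $j>N$, I would instead interpolate $\|\Delta_j f\|_p$ between $\|\Delta_j f\|_\infty$ and $\|\Delta_j f\|_2$ but now use Bernstein's inequality to trade the $L^\infty$ factor and the $L^2$ factor against derivatives: write $\|\Delta_j f\|_2\le C2^{-2j}\|\Delta_j\Delta f\|_2$ (twice Bernstein, using the frequency support) and $\|\Delta_j f\|_\infty\le 2^{sj}\|f\|_{B^{-s}_{\infty,\infty}}$ again. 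This yields a series $\sum_{j>N}2^{j[s(1-2/p)-4/p]}$, which converges precisely when $s(1-2/p)-4/p<0$, i.e.\ $p<2+4/s$, and sums to a constant times $2^{N[s(1-2/p)-4/p]}$ times $\|f\|_{B^{-s}_{\infty,\infty}}^{1-2/p}\|f\|_{H^2}^{2/p}$; this is the source of the second term $1/(1-2^{s(1-2/p)-4/p})$ in $C_{n,p,s}$.

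Adding the two contributions gives
$$
\|f\|_p\le C_{n,p,s}\Big(2^{sN(1-\frac2p)}\|f\|_{B^{-s}_{\infty,\infty}}^{1-\frac2p}\|f\|_{H^1}^{\frac2p}
+2^{N[s(1-\frac2p)-\frac4p]}\|f\|_{B^{-s}_{\infty,\infty}}^{1-\frac2p}\|f\|_{H^2}^{\frac2p}\Big),
$$
and I would then choose $2^N$ to balance the two terms, i.e.\ pick $N$ so that $2^{4N/p}\sim\|f\|_{H^2}^{2/p}/\|f\|_{H^1}^{2/p}$, equivalently $2^N\sim(\|f\|_{H^2}/\|f\|_{H^1})^{1/2}$. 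Substituting this value back in, the $H^1$ and $H^2$ norms recombine into the claimed powers: the exponent on $\|f\|_{H^1}$ becomes $\frac2p-s(1-\frac2p)\cdot\frac12\cdot\frac{2}{(\text{normalization})}$; carefully, one gets $\|f\|_{H^1}^{\frac4p-s(1-\frac2p)}\|f\|_{H^2}^{s(1-\frac2p)-\frac2p}$ with the Besov factor $\|f\|_{B^{-s}_{\infty,\infty}}^{1-2/p}$ untouched, which is exactly the stated inequality. One small point to check: the chosen $N$ need not be an integer, so I would take $N=\lfloor\cdot\rfloor$ or $\lceil\cdot\rceil$ and absorb the resulting bounded factor $2^{O(1)}$ into $C_n$; the nondegeneracy $f\neq0$ is used so that the ratio $\|f\|_{H^2}/\|f\|_{H^1}$ is well defined and positive, and one should note $\|f\|_{H^1}\le\|f\|_{H^2}$ so that $N\ge0$ up to a harmless shift, which is why only $j\ge-1$ terms with the split at a genuine frequency matter.

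The main obstacle, such as it is, is purely bookkeeping: getting the two geometric series to converge forces exactly the stated range $p\in(2+\frac2s,2+\frac4s)$ — the left endpoint $p>2+\frac2s$ is what makes $s(1-\frac2p)-\frac2p>0$ (needed so that the low-frequency bound, after optimization, beats the high-frequency one the right way and the constant $2^{s(1-2/p)-2/p}-1$ is positive), while $p<2+\frac4s$ ensures $s(1-\frac2p)-\frac4p<0$ so the high-frequency series converges — and then verifying that the optimization exponents collapse to precisely $\frac4p-s(1-\frac2p)$ and $s(1-\frac2p)-\frac2p$. I would double-check these add up to $\frac2p$ (the total $H$-Sobolev weight) and that together with $1-\frac2p$ the total homogeneity is consistent, as a sanity check on the scaling. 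No deep ideas are required beyond Bernstein and the definition of $B^{-s}_{\infty,\infty}$; the content is in packaging the constant $C_{n,p,s}$ with its explicit blow-up as $p$ approaches either endpoint.
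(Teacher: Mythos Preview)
Your low-frequency estimate is too crude, and this is a genuine gap: you bound $\|\Delta_j f\|_2$ by $\|f\|_{L^2}\le\|f\|_{H^1}$ without using Bernstein, which gives the geometric ratio $2^{s(1-2/p)}$ rather than $2^{s(1-2/p)-2/p}$. With your ratio the low-frequency series already converges for every $p>2$, so your own argument never uses the hypothesis $p>2+\tfrac{2}{s}$, and the constant $1/(2^{s(1-2/p)-2/p}-1)$ that you claim to produce simply does not appear. More seriously, carrying out your optimization $2^N\sim(\|f\|_{H^2}/\|f\|_{H^1})^{1/2}$ in the displayed inequality actually yields
\[
\|f\|_p\le C\,\|f\|_{B^{-s}_{\infty,\infty}}^{1-\frac2p}\,\|f\|_{H^1}^{\frac2p-\frac{s}{2}(1-\frac2p)}\,\|f\|_{H^2}^{\frac{s}{2}(1-\frac2p)},
\]
not the stated exponents $\tfrac4p-s(1-\tfrac2p)$ and $s(1-\tfrac2p)-\tfrac2p$; since $\|f\|_{H^1}\le\|f\|_{H^2}$ and your $H^2$-exponent is strictly larger, this is a strictly weaker inequality than the lemma.

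The fix is exactly what the paper does: for $j\ge0$ use Bernstein once more on the $L^2$ factor, $\|\Delta_j f\|_2\le C_n 2^{-j}\|\nabla\Delta_j f\|_2\le C_n 2^{-j}\|\nabla f\|_2$, so that the low-frequency piece becomes $C_n 2^{j[s(1-2/p)-2/p]}\|f\|_{B^{-s}_{\infty,\infty}}^{1-2/p}\|\nabla f\|_2^{2/p}$. Now the low-frequency series has ratio $2^{s(1-2/p)-2/p}$, which is $>1$ precisely when $p>2+\tfrac2s$ (this is where the left endpoint and the first term in $C_{n,p,s}$ come from), the balancing choice becomes $2^{k_0}\sim\|f\|_{H^2}/\|f\|_{H^1}$ rather than its square root, and substitution gives the correct exponents. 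Everything else in your outline---the high-frequency bound, the role of $p<2+\tfrac4s$, and the integer-rounding remark---is fine.
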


\begin{proof}
For any integer $k\geq0$ and $p\in[2,\infty]$,
it follows from the H\"older and Bernstein inequalities that
\begin{align*}
  \|\Delta_kf\|_p\leq&\|\Delta_kf\|_2^{\frac2p}\|\Delta_kf\|_\infty^{1-\frac2p} \leq C_n2^{-\frac{2k}{p}}\|\nabla\Delta_kf\|_2^{\frac2p}\left(2^{ks} 2^{-ks}\|\Delta_kf\|_\infty\right)^{1-\frac2p}\\
  \leq&C_n2^{k\left[s(1-\frac2p)-\frac2p\right]}\|\nabla\Delta_kf\|_2^{\frac2p} \|f\|_{B^{-s}_{\infty,\infty}}^{1-\frac2p},
\end{align*}
from which, using the Bernstein inequality again, one arrives at
\begin{align*}
  \|\Delta_kf\|_p\leq&C_n2^{k[s(1-\frac2p)-\frac4p]}\|\Delta\Delta_kf\|_2^{\frac2p} \|f\|_{B^{-s}_{\infty,\infty}}^{1-\frac2p},
\end{align*}
for any integer $k\geq0$. Thus, noticing that $\|\Delta_kg\|_2\leq C_n\|g\|_2,$
we obtain
\begin{eqnarray}
  &&\|\Delta_kf\|_p\leq C_n2^{k\left[s(1-\frac2p)-\frac2p\right]}\|\nabla f\|_2^{\frac2p} \|f\|_{B^{-s}_{\infty,\infty}}^{1-\frac2p},\label{L2}\\
  &&\|\Delta_kf\|_p\leq C_n 2^{-k[\frac4p -s(1-\frac2p)]}\|\Delta f\|_2^{\frac2p} \|f\|_{B^{-s}_{\infty,\infty}}^{1-\frac2p},\label{L3}
\end{eqnarray}
for any integer $k\geq0$ and $p\in[2,\infty]$. For $k=-1$, by the H\"older inequality, and noticing that $\|\Delta_{-1}f\|_2\leq C_n\|f\|_2$, one deduces \begin{align}
  \|\Delta_{-1}f\|_p\leq &\|\Delta_{-1}f\|_2^{\frac2p}\|\Delta_{-1}f \|_\infty^{1-\frac2p}\leq C_n2^{-s(1-\frac2p)}\|f\|_2^{\frac2p} \|f\|_{B_{\infty,\infty}^{-s}}^{1-\frac2p}\nonumber\\
  \leq&C_n2^{-[s(1-\frac2p)-\frac2p]}\|f\|_2^{\frac2p} \|f\|_{B_{\infty,\infty}^{-s}}^{1-\frac2p},\label{L4}
\end{align}
for any $p\in[2,\infty]$.

For any $p\in(2+\frac2s,2+\frac4s)$, one can easily verify
\begin{equation}\label{L1}
s\left(1-\frac2p\right)-\frac2p>0\quad\mbox{and}\quad \frac4p -s\left(1-\frac2p\right)>0.
\end{equation}
On account of this, for any integer $k_0\geq0$, it follows from (\ref{L2})--(\ref{L4}) that
\begin{align}
  \|f\|_p=&\left\|\sum_{k=-1}^\infty\Delta_kf\right\|_p\leq\|\Delta_{-1} f\|_p+\sum_{k=0}^{k_0}\|\Delta_kf\|_p+\sum_{k=k_0+1}^\infty\|\Delta_kf\|_p \nonumber\\
  \leq&C_n2^{-[s(1-\frac2p)-\frac2p]}\|f\|_2^{\frac2p}\|f\|_{B_{\infty,\infty}^{-s}}^{1-\frac2p} +C_n\sum_{k=0}^{k_0}2^{k[s(1-\frac2p)-\frac2p]}\|\nabla f\|_2^{\frac2p} \|f\|_{B_{\infty,\infty}^{-s}}^{1-\frac2p}\nonumber\\
  &+C_n\sum_{k=k_0+1}^{\infty}2^{-k[\frac4p-s(1-\frac2p)]}\|\Delta f\|_2^{\frac2p} \|f\|_{B_{\infty,\infty}^{-s}}^{1-\frac2p}\nonumber\\
  \leq&C_n\|f\|_{B_{\infty,\infty}^{-s}}^{1-\frac2p}\left(
  \sum_{k=-1}^{k_0}2^{k[s(1-\frac2p)-\frac2p]}\|f\|_{H^1}^{\frac2p}+ \sum_{k=k_0+1}^{\infty}2^{-k[\frac4p-s(1-\frac2p)]}\|\Delta f\|_2^{\frac2p}\right)\nonumber\\
  \leq&C_{n,p,s}\|f\|_{B_{\infty,\infty}^{-s}}^{1-\frac2p}\left(
  2^{k_0[s(1-\frac2p)-\frac2p]}\|f\|_{H^1}^{\frac2p}+ 2^{-(k_0+1)[\frac4p-s(1-\frac2p)]}\|\Delta f\|_2^{\frac2p}\right),\label{L5}
\end{align}
for any $p\in(2+\frac2s, 2+\frac4s)$.

Note that $\frac{\|\Delta f\|_2+\|f\|_{H^1}}{\|f\|_{H^1}}\geq1$, there is a unique integer $k_0\geq0$, such that
$$
2^{k_0}\leq\frac{\|\Delta f\|_2+\|f\|_{H^1}}{\|f\|_{H^1}}<2^{k_0+1}.
$$
Choosing such $k_0$ in (\ref{L5}), and recalling (\ref{L1}), we obtain
\begin{align*}
  \|f\|_p\leq&C_{n,p,s}\|f\|_{B_{\infty,\infty}^{-s}}^{1-\frac2p} \left(
  \frac{\|\Delta f\|_2+\|f\|_{H^1}}{\|f\|_{H^1}}\right)^{s(1-\frac2p)-\frac2p} \|f\|_{H^1}^{\frac2p}\\
  &+C_{n,p,s}\|f\|_{B_{\infty,\infty}^{-s}}^{1-\frac2p} \left(\frac{\|f\|_{H^1}}{\|\Delta f\|_2+\|f\|_{H^1}}\right)^{\frac4p-s(1-\frac2p)}\|\Delta f\|_2^{\frac2p}\\
  \leq&C_{n,p,s}\|f\|_{B_{\infty,\infty}^{-s}}^{1-\frac2p} \|f\|_{H^1}^{\frac4p-s(1-\frac2p)}\|f\|_{H^2}^{s(1-\frac2p)-\frac2p},
\end{align*}
for any $p\in(2+\frac2s, 2+\frac4s)$. This completes the proof.
\end{proof}

\begin{lemma}
  \label{lemL3}
  Let $s\in(-\infty,1)$. Then, for any nonzero function $f\in B_{\infty,\infty}^{s}$ such that $\nabla f\in H^1$, we have
  \begin{equation*}
    \|\nabla f\|_q\leq C_{n,q,s}\|f\|_{B_{\infty,\infty}^s}^{1-\frac2q}\|\nabla f\|_2^{\frac2q-(1-s)(1-\frac2q)}\|\nabla f\|_{H^1}^{(1-s)(1-\frac2q)},
  \end{equation*}
  for any $q\in(2,2+\frac{2}{1-s})$, where
  $$
  C_{n,q,s}=C_n\max\left\{\frac{1}{2^{(1-s)(1-\frac2q)}-1}, \frac{1}{1-2^{-[\frac2q-(1-s)(1-\frac2q)]}}\right\},
  $$
  for a positive constant $C_n$ depending only on $n$.
\end{lemma}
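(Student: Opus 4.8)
The plan is to follow the proof of Lemma~\ref{lemL1} essentially verbatim, replacing $f$ by $\nabla f$ throughout and absorbing the extra derivative with Bernstein's inequality (Lemma~\ref{lem:Berstein}). First I would assemble the dyadic building blocks. For an integer $k\ge0$ and $q\in[2,\infty]$, interpolating between $L^2$ and $L^\infty$ gives $\|\Delta_k\nabla f\|_q\le\|\Delta_k\nabla f\|_2^{2/q}\|\Delta_k\nabla f\|_\infty^{1-2/q}$; Bernstein's inequality then yields $\|\Delta_k\nabla f\|_\infty\le C_n2^k\|\Delta_kf\|_\infty\le C_n2^{k(1-s)}\|f\|_{B^s_{\infty,\infty}}$ from the definition of the Besov norm, while in the $L^2$ factor one either keeps $\|\Delta_k\nabla f\|_2\le C_n\|\nabla f\|_2$ or extracts a further derivative, $\|\Delta_k\nabla f\|_2\le C_n2^{-k}\|\Delta f\|_2\le C_n2^{-k}\|\nabla f\|_{H^1}$. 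This gives, for $k\ge0$,
\[
\|\Delta_k\nabla f\|_q\le C_n2^{k(1-s)(1-\frac2q)}\|\nabla f\|_2^{\frac2q}\|f\|_{B^s_{\infty,\infty}}^{1-\frac2q},\qquad
\|\Delta_k\nabla f\|_q\le C_n2^{-k[\frac2q-(1-s)(1-\frac2q)]}\|\nabla f\|_{H^1}^{\frac2q}\|f\|_{B^s_{\infty,\infty}}^{1-\frac2q}.
\]
The $k=-1$ block is treated identically, using $\|\Delta_{-1}\nabla f\|_\infty\le C_n\|\Delta_{-1}f\|_\infty\le C_n2^{s}\|f\|_{B^s_{\infty,\infty}}$; Bernstein's second inequality is not available there, but the first one suffices, and the resulting bound fits under the first inequality above up to a harmless factor of $2$.

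Next I would write $\nabla f=\sum_{k\ge-1}\Delta_k\nabla f$, split the sum at a threshold $k_0\ge0$, apply the first bound for $-1\le k\le k_0$ and the second for $k\ge k_0+1$, and sum the two geometric series. The hypothesis $q\in(2,2+\tfrac2{1-s})$ is exactly what makes both exponents strictly positive: $(1-s)(1-\tfrac2q)>0$ since $s<1$ and $q>2$, and $\tfrac2q-(1-s)(1-\tfrac2q)>0$ is equivalent to $q<2+\tfrac2{1-s}$. Hence the low sum is dominated by its top term with constant $\tfrac1{2^{(1-s)(1-2/q)}-1}$, and the high sum by its bottom term with constant $\tfrac1{1-2^{-[2/q-(1-s)(1-2/q)]}}$; these produce the stated $C_{n,q,s}$. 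Finally, since $\|\nabla f\|_{H^1}\ge\|\nabla f\|_2$ (and one may assume $\nabla f\not\equiv0$, otherwise the inequality is trivial), I would pick the unique integer $k_0\ge0$ with $2^{k_0}\le\|\nabla f\|_{H^1}/\|\nabla f\|_2<2^{k_0+1}$; substituting it makes the two surviving terms coincide, each a constant multiple of $\|f\|_{B^s_{\infty,\infty}}^{1-\frac2q}\|\nabla f\|_2^{\frac2q-(1-s)(1-\frac2q)}\|\nabla f\|_{H^1}^{(1-s)(1-\frac2q)}$, which is the claim; as a check, the three exponents of $f$ on the right add up to $1$.

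The only step requiring genuine care is the bookkeeping of the powers of $2^k$ in the dyadic estimates: the gradient shifts every exponent of $2^k$ appearing in Lemma~\ref{lemL1} by exactly $1-\tfrac2q$, one must pass cleanly from $\|\Delta f\|_2$ to $\|\nabla f\|_{H^1}$ (via $\|\nabla^2 f\|_2=\|\Delta f\|_2\le\|\nabla f\|_{H^1}$), and the low-frequency block $k=-1$ must be made to fit the same template despite the failure of Bernstein's second inequality there. Once these estimates are pinned down, the threshold optimization is word-for-word the one already carried out in the proof of Lemma~\ref{lemL1}.
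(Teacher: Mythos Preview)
Your proposal is correct and follows essentially the same approach as the paper's proof: interpolate each dyadic block between $L^2$ and $L^\infty$, use Bernstein to convert the $L^\infty$ factor to the Besov norm and to trade derivatives in the $L^2$ factor, split the sum at a threshold $k_0$, and optimize over $k_0$. The only cosmetic difference is that the paper keeps $\|\Delta f\|_2$ in the high-frequency block estimate and passes to $\|\nabla f\|_{H^1}$ only at the end via $\|\Delta f\|_2+\|\nabla f\|_2\sim\|\nabla f\|_{H^1}$, whereas you make this replacement already at the block level.
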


\begin{proof}
  For any $q\in[2,\infty]$ and any integer $k\geq0$,
  it follows from the H\"older and Bernstein inequalities that
  $$
  \|\nabla\Delta_kf\|_q\leq\|\nabla\Delta_kf\|_2^{\frac2q}\|\nabla\Delta_kf \|_\infty^{1-\frac2q}\leq C_n2^{k(1-s)(1-\frac2q)}\|\nabla\Delta_kf\|_2^{\frac2q}\|f\|_{B^s_{\infty, \infty}}^{1-\frac2q},
  $$
  from which, using the Bernstein inequality again, we have
  $$
  \|\nabla\Delta_kf\|_q\leq C_n2^{-k[\frac2q-(1-s)(1-\frac2q)]}\|\Delta\Delta_k f\|_2^{\frac2q}\|f\|_{B^s_{\infty,\infty}}^{1-\frac2q}.
  $$
  Thanks to the above two inequalities, noticing that $\|\Delta_kg\|_2\leq C_n\|g\|_2$, we get
  \begin{eqnarray}
    &&\|\nabla\Delta_kf\|_q\leq C_n2^{k(1-s)(1-\frac2q)}\|\nabla f\|_2^{\frac2q}\|f\|_{B^s_{\infty,\infty}}^{1-\frac2q},\label{L8}\\
    &&\|\nabla\Delta_kf\|_q\leq C_n2^{-k[\frac2q-(1-s)(1-\frac2q)]}\|\Delta f\|_2^{\frac2q}\|f\|_{B^s_{\infty,\infty}}^{1-\frac2q},\label{L9}
  \end{eqnarray}
  for any $q\in[2,\infty]$ and integer $k\geq0$. For $k=-1$, by the H\"older and Bernstein inequalities, we have
  \begin{align}
    \|\nabla\Delta_{-1}f\|_q\leq&\|\nabla\Delta_{-1}f\|_2^{\frac2q}\|\nabla\Delta_{-1} f\|_\infty^{1-\frac2q}\leq C_n2^{s(1-\frac2q)}\|\nabla f\|_2^{\frac2q}\|f\|_{B^s_{\infty,\infty}}^{1-\frac2q}\nonumber\\
    \leq&C_n2^{-(1-s)(1-\frac2q)}\|\nabla f\|_2^{\frac2q}\|f\|_{B^s_{\infty,\infty}}^{1-\frac2q}, \label{L10}
  \end{align}
  for any $q\in[2,\infty]$.

  For any $q\in(2,2+\frac{2}{1-s})$, one can easily check that
  $$
  (1-s)\left(1-\frac2q\right)>0\quad\mbox{and}\quad\frac2q-(1-s) \left(1-\frac2q\right)>0.
  $$
  Thus, it follows from (\ref{L8})--(\ref{L10}) that
  \begin{align}
    \|\nabla f\|_q=&\left\|\sum_{k=-1}^\infty\nabla\Delta_kf\right\|_q\leq \sum_{k=-1}^{k_0}\|\nabla\Delta_kf\|_q+\sum_{k=k_0+1}^\infty\|\nabla\Delta_k f\|_q\nonumber\\
    \leq&C_n\|f\|_{B^s_{\infty,\infty}}^{1-\frac2q}\left(\sum_{k=-1}^{k_0} 2^{k(1-s)(1-\frac2q)}\|\nabla f\|_2^{\frac2q}+\sum_{k=k_0+1}^\infty 2^{-k[\frac2q-(1-s)(1-\frac2q)]}\|\Delta f\|_2^{\frac2q}\right)\nonumber\\
    \leq&C_{n,p,s}\|f\|_{B^s_{\infty,\infty}}^{1-\frac2q}\left( 2^{k_0(1-s)(1-\frac2q)}\|\nabla f\|_2^{\frac2q}+ 2^{-(k_0+1)[\frac2q-(1-s)(1-\frac2q)]}\|\Delta f\|_2^{\frac2q}\right),\label{L11}
  \end{align}
  for any $k_0\geq0$. Take $k_0\geq0$ be the unique integer such that
  $$
  2^{k_0}\leq\frac{\|\Delta f\|_2+\|\nabla f\|_2}{\|\nabla f\|_2}<2^{k_0+1},
  $$
  then it follows from (\ref{L11}) that
  \begin{align*}
    \|\nabla f\|_q\leq&C_{n,p,s}\|f\|_{B^s_{\infty,\infty}}^{1-\frac2q}\left( \frac{\|\Delta f\|_2+\|\nabla f\|_2}{\|\nabla f\|_2}\right)^{(1-s)(1-\frac2q)}\|\nabla f\|_2^{\frac2q}\\
    &+C_{n,p,s}\|f\|_{B^s_{\infty,\infty}}^{1-\frac2q}\left( \frac{\|\nabla f\|_2}{\|\Delta f\|_2+\|\nabla f\|_2}\right)^{\frac2q-(1-s)(1-\frac2q)}\|\Delta f\|_2^{\frac2q}\\
    \leq&C_{n,p,s}\|f\|_{B^s_{\infty,\infty}}^{1-\frac2q} \|\nabla f\|_2^{\frac2q-(1-s)(1-\frac2q)}\|\nabla f\|_{H^1}^{(1-s)(1-\frac2q)},
  \end{align*}
  for any $q\in(2,2+\frac{2}{1-s})$, proving the conclusion.
\end{proof}

We also will use an integral in time version of the logarithmic type inequality stated in the next lemma. Some similar inequalities can be found in Huang--Wang \cite{HW} and Hong--Li--Xin \cite{HLX}

\begin{lemma}
  \label{lemL5}
  Suppose that $n=2,3$. Then the following two inequalities hold
  \begin{align*}
    \int_{t_1}^{t_2}\|\nabla f\|_\infty dt\leq& C\left[\left(\int_{t_1}^{t_2}\|f\|_{B_{\infty,\infty}^1} dt\right)\log\left(\int_{t_1}^{t_2}\|\nabla\Delta f\|_2 dt+e\right) +1\right],
  \end{align*}
  for any $f$ such that the quantities in the formulas make sense and are finite, where $C$ is a positive constant independent of $t_1$ and $t_2$.
\end{lemma}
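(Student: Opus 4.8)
The plan is to reduce the estimate to a Littlewood--Paley decomposition of $\nabla f$ and exploit the fact that the top frequencies contribute only logarithmically. First I would write $\nabla f = \sum_{k\ge -1}\nabla\Delta_k f$ and split the sum at an integer $N\ge 0$ to be chosen, bounding
\[
\|\nabla f\|_\infty \le \sum_{k=-1}^{N}\|\nabla\Delta_k f\|_\infty + \sum_{k>N}\|\nabla\Delta_k f\|_\infty .
\]
For the low-frequency block, each term satisfies $\|\nabla\Delta_k f\|_\infty \le C 2^{k(1-1)}\|f\|_{B^1_{\infty,\infty}} = C\|f\|_{B^1_{\infty,\infty}}$ directly from the definition of the $B^1_{\infty,\infty}$ norm (this is the $s=1$, $q=\infty$ analogue of \eqref{L10}), so the low block is bounded by $C(N+2)\|f\|_{B^1_{\infty,\infty}}$. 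For the high-frequency block, I would use Bernstein's inequality (Lemma \ref{lem:Berstein}) twice to gain two derivatives: in dimension $n=2,3$ one has $\|\nabla\Delta_k f\|_\infty \le C 2^{k(1+n/2)}\|\Delta_k f\|_2 \le C 2^{-k(1-n/2)}\|\nabla\Delta f\|_2 \le C 2^{-k/2}\|\nabla\Delta f\|_2$ (using $n\le 3$ so that $1-n/2\ge -1/2$; more precisely $\|\nabla\Delta_k f\|_\infty\le C2^{k n/2}\|\nabla\Delta_k f\|_2$ and then $\|\nabla\Delta_k f\|_2\le C2^{-2k}\|\nabla\Delta\Delta_k f\|_2\le C2^{-2k}\|\nabla\Delta f\|_2$, giving the exponent $n/2-2<0$). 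Summing the geometric series over $k>N$ gives a bound $C 2^{-cN}\|\nabla\Delta f\|_2$ for some $c>0$ depending on $n$. Thus for every integer $N\ge 0$,
\[
\|\nabla f\|_\infty \le C(N+2)\|f\|_{B^1_{\infty,\infty}} + C 2^{-cN}\|\nabla\Delta f\|_2 .
\]

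Next I would integrate this in time over $[t_1,t_2]$. The subtlety is that $N$ must be chosen \emph{after} integrating, as a single integer valid for the whole interval, so I would first integrate the pointwise-in-$t$ inequality and then optimize. Integrating gives, for each fixed $N$,
\[
\int_{t_1}^{t_2}\|\nabla f\|_\infty\,dt \le C(N+2)\int_{t_1}^{t_2}\|f\|_{B^1_{\infty,\infty}}\,dt + C 2^{-cN}\int_{t_1}^{t_2}\|\nabla\Delta f\|_2\,dt .
\]
Now I would choose $N$ to be (roughly) $\big\lceil \tfrac{1}{c}\log_2\big(\int_{t_1}^{t_2}\|\nabla\Delta f\|_2\,dt + e\big)\big\rceil$, so that $2^{-cN}\int_{t_1}^{t_2}\|\nabla\Delta f\|_2\,dt \le 1$, while $N+2 \le C\log\big(\int_{t_1}^{t_2}\|\nabla\Delta f\|_2\,dt + e\big)+C$. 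Substituting this choice yields exactly
\[
\int_{t_1}^{t_2}\|\nabla f\|_\infty\,dt \le C\left[\left(\int_{t_1}^{t_2}\|f\|_{B^1_{\infty,\infty}}\,dt\right)\log\left(\int_{t_1}^{t_2}\|\nabla\Delta f\|_2\,dt + e\right) + 1\right],
\]
which is the claimed inequality; the additive $+1$ absorbs the $C(N+2)\cdot(\text{const})$ leftover as well as the case where the logarithm is close to its minimum.

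The main obstacle, such as it is, is bookkeeping rather than conceptual: one must make sure the high-frequency Bernstein estimate genuinely produces a \emph{negative} power of $2^k$, which is exactly where the hypothesis $n=2,3$ enters (for $n\ge 4$ two derivatives would not suffice and one would need $\|\nabla^2\Delta f\|_2$ or more), and one must be careful that the chosen $N$ is a nonnegative integer and that the constant $C$ does not secretly depend on $t_1,t_2$ — it does not, since every constant above comes only from Bernstein's inequality and the geometric series ratio, both of which depend only on $n$ and the fixed Littlewood--Paley functions $\chi,\phi$. A minor point is that $f$ should be assumed regular enough (say $\nabla\Delta f\in L^1(t_1,t_2;L^2)$ and $f\in L^1(t_1,t_2;B^1_{\infty,\infty})$) for all integrals to be finite, which is built into the hypothesis that "the quantities in the formulas make sense and are finite."
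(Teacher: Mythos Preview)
Your proposal is correct and follows essentially the same route as the paper: a Littlewood--Paley split at a cutoff $N$ (the paper's $k_0$), with the low block controlled by $\|f\|_{B^1_{\infty,\infty}}$ per term and the high block by Bernstein yielding the factor $2^{-k(2-n/2)}\|\nabla\Delta f\|_2$, then integration in $t$ and a choice of $N$ depending on $\int_{t_1}^{t_2}\|\nabla\Delta f\|_2\,dt$. The only slip is the first displayed high-frequency exponent $-k(1-n/2)$, which is off by one; your parenthetical ``more precisely'' computation correctly gives $n/2-2$, matching the paper's (\ref{L16}).
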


\begin{proof}
  We first prove the first inequality. By the definition of $B^1_{\infty,\infty}$, and the Bernstein inequality, it is obvious that
  \begin{equation}\label{L15}
  \|\nabla\Delta_kf\|_\infty\leq C2^k\|\Delta_kf\|_\infty\leq C\|f\|_{B^1_{\infty,\infty}},
  \end{equation}
  for any integer $k\geq-1$. Using the Bernstein inequality again, and noticing that $\|\Delta_kg\|_2\leq C_n\|g\|_2$, we deduce
  \begin{align}
    \|\nabla\Delta_kf\|_\infty\leq C2^{(\frac n2-2)k}\|\nabla\Delta\Delta_kf\|_2\leq C2^{-k(2-\frac n2)}\|\nabla\Delta f\|_2, \label{L16}
  \end{align}
  for any integer $k\geq0$. With the aid of the above two inequalities, we have
  \begin{align*}
    \|\nabla f\|_\infty =&\left\|\sum_{k=-1}^\infty\nabla\Delta_kf\right\|_\infty \leq\sum_{k=-1}^{k_0}\|\nabla\Delta_kf\|_\infty+\sum_{k=k_0+1}^\infty \|\nabla\Delta_kf\|_\infty\\
    \leq&C(k_0+1)\|f\|_{B^1_{\infty,\infty}}+C\sum_{k=k_0+1}^\infty 2^{-k(2-\frac n2)}\|\nabla\Delta f\|_2\\
    \leq&C\left[(k_0+1)\|f\|_{B^1_{\infty,\infty}}+2^{-(k_0+1)(2-\frac n2)}\|\nabla\Delta f\|_2\right],
  \end{align*}
  for any integer $k_0\geq0$. Integrating the above inequality with respect to $t$ over the interval $(t_1,t_2)$, and choosing $k_0\geq0$ be the unique integer such that
  $$
  k_0\leq\left[\left(2-\frac n2\right)\log 2\right]^{-1}\log\left(\int_{t_1}^{t_2}\|\nabla \Delta f\|_2+1\right)<k_0+1,
  $$
  then we obtain
  \begin{align*}
    \int_{t_1}^{t_2}\|\nabla f\|_\infty dt\leq&C(k_0+1)\int_{t_1}^{t_2}\|f\|_{B^1_{\infty, \infty}}dt+C2^{-(k_0+1)(2-\frac n2)}\int_{t_1}^{t_2}\|\nabla\Delta f\|_2dt\\
    \leq&C\left(\int_{t_1}^{t_2}\|f\|_{B^1_{\infty, \infty}}dt\right)\left[ \log \left(\int_{t_1}^{t_2}\|\nabla \Delta f\|_2+1\right)+1\right]\\
    &+Ce^{-[(2-\frac n2)\log 2](k_0+1)}\int_{t_1}^{t_2}\|\nabla \Delta f\|_2dt\\
    \leq&C\left[\left(\int_{t_1}^{t_2}\|f\|_{B^1_{\infty, \infty}}dt\right) \log \left(\int_{t_1}^{t_2}\|\nabla \Delta f\|_2+e\right)+1\right],
  \end{align*}
  proving the fist inequality.
\end{proof}

\section{Proofs of Theorem \ref{thm1} and Theorem \ref{thm2}}

In this section, we give the proof of Theorem \ref{thm1} and Theorem \ref{thm2}.

\begin{proof}[\textbf{Proof of Theorem \ref{thm1}}]
   Note that $B^{-s}_{p,\infty}\hookrightarrow B^{-s-\frac np}_{\infty,\infty}$, it follows that
   $$
   L^q(0,T; B^{-s}_{p,\infty})\hookrightarrow L^q(0,T;B^{-s_1}_{\infty,\infty}),\quad s_1:=s+\frac np.
   $$
   By assumption
   $$
   \frac2q+\frac np=1-s, \quad \mbox{with } p\in\left(\frac{n}{1-s},\infty\right]\mbox{and } s\in(0,1),
   $$
   it has
   $$
   s_1:=s+\frac np=1-\frac2q\in[s,1)\subseteq(0,1).
   $$
   Thus, by the aid of the assumption in Theorem \ref{thm1}, we always has $u\in L^q(0,T; B^{-s_1}_{\infty,\infty})$, for some $s_1\in(0,1)$. As a result, to prove Theorem \ref{thm1}, it suffice to prove the case that $p=\infty$. Because of this, without loss of generality, we suppose that $u\in L^q(0,T; B^{-s}_{\infty,\infty})$ for some $s\in(0,1)$ in the following proof.

   We first note that for any $s\in(0,1)$, one can choose the numbers $p_s\in(2+\frac2s, 2+\frac4s)$ and $q_s\in(2,2+\frac{2}{s+1})$, such that
  $$
  \frac{2}{p_s}+\frac{2}{q_s}=1.
  $$
  In fact, when $p$ and $q$ run in the intervals $(2+\frac2s, 2+\frac4s)$ and $(2,2+\frac{2}{s+1})$, respectively, then the quantity $\frac2p+\frac2q$ runs in the interval $(\frac{1+2s}{2+s}, 1+\frac{s}{s+1})$, and each point in this interval can be arrived at by $\frac2p+\frac2q$. The validity of the above equality then follows from the observation that $\frac{1+2s}{2+s}<1<1+\frac{s}{s+1}$ for any $s\in(0,1)$. By Lemma \ref{lemL1} and Lemma \ref{lemL3}, we have the following estimates
  \begin{eqnarray*}
    &&\|u\|_{p_s}\leq C \|u\|_{B^{-s}_{\infty,\infty}}^{1-\frac{2}{p_s}}\|u\|_{H^1} ^{\frac{4}{p_s}-s(1-\frac{2}{p_s})}\|u\|_{H^2}^{s(1-\frac{2}{p_s})- \frac{2}{p_s}},\\
    &&\|\nabla u\|_{q_s}\leq C  \|u\|_{B^{-s}_{\infty,\infty}}^{1-\frac{2}{q_s}}\|u\|_{H^1} ^{\frac{2}{q_s}-(1+s)(1-\frac{2}{q_s})}\|u\|_{H^2}^{(1+s)(1-\frac{2} {q_s})},
  \end{eqnarray*}
  and thus
  \begin{align*}
    \|u\|_{p_s}\|\nabla u\|_{q_s}\leq&C\|u\|_{B^{-s}_{\infty,\infty}}^{ 2-\frac{2}{p_s}-\frac{2}{q_s}}\|u\|_{H^1} ^{\frac{4}{p_s}-s(1-\frac{2}{p_s})+ \frac{2}{q_s}-(1+s)(1-\frac{2}{q_s})}\\
    &\times\|u\|_{H^2}^{s(1-\frac{2}{p_s})- \frac{2}{p_s}+(1+s)(1-\frac{2} {q_s})}=C\|u\|_{B^{-s}_{\infty,\infty}}\|u\|_{H^1}^{1-s} \|u\|_{H^2}^s,
  \end{align*}
  where $C$ is a positive constant.

  With the aid of the above estimate, by hypothesis (H4), it follows from the H\"older and Young inequalities that
  \begin{align*}
    \frac{d}{dt}\|\nabla u\|_2^2&+c_1\|\Delta u\|_2^2\leq c_1'\int_{\mathbb R^n}|u|^2|\nabla u|^2 dx\\
    \leq& c_1'\|u\|_{p_s}^{2}\|\nabla u\|_{q_s}^{2}
    \leq C\|u\|_{B^{-s}_{\infty,\infty}}^2\|u\|_{H^1}^{2(1-s)} \|u\|_{H^2}^{2s}\\
    \leq& C\|u\|_{B^{-s}_{\infty,\infty}}^2(\|u\|_{H^1}^{2(1-s)} \|\Delta u\|_{2}^{2s}+\|u\|_{H^1}^2)\\
    \leq&\frac{c_1}{2}\|\Delta u\|_2^2+C(\|u\|_{B^{-s}_{\infty,\infty}}^{\frac{2}{1-s}} +\|u\|_{B^{-s}_{\infty,\infty}})\|u\|_{H^1}^2\\
    \leq&\frac{c_1}{2}\|\Delta u\|_2^2+C(\|u\|_{B^{-s}_{\infty,\infty}}^{\frac{2}{1-s}} +1)\|u\|_{H^1}^2,
  \end{align*}
  and thus
  \begin{align*}
    \frac{d}{dt}\|\nabla u\|_2^2+\frac{c_1}{2}\|\Delta u\|_2^2\leq C(\|u\|_{B^{-s}_{\infty,\infty}}^{\frac{2}{1-s}} +1)(\|\nabla u\|_2^2+\|u\|_2^2),
  \end{align*}
  for any $t\in(0,T)$. Applying the Gronwall inequality to the above inequality and recalling the hypothesis (H3), we obtain
  \begin{align*}
    &\sup_{0\leq \tau\leq t}\|\nabla u\|_2^2+\frac{c_1}{2}\int_0^t\|\Delta u\|_2^2d\tau\\
    \leq& e^{C\int_0^t(1+\|u\|_{B^{-s}_{\infty,\infty}}^{\frac{2}{1-s}})d\tau}\left( \|\nabla u_0\|_2^2+\mathcal M(t)\int_0^t(1+\|u\|_{B^{-s}_{\infty,\infty}}^{\frac{2}{1-s}}) d\tau\right)\\
    \leq& e^{C\int_0^T(1+\|u\|_{B^{-s}_{\infty,\infty}}^{\frac{2}{1-s}}) d\tau}\left( \|\nabla u_0\|_2^2+\mathcal M(T)\int_0^T(1+\|u\|_{B^{-s}_{\infty,\infty}}^{\frac{2}{1-s}}) d\tau\right),
  \end{align*}
  for any $t\in(0,T)$.
  On account of this, and recalling the hypothesis (H3), we obtain the a priori estimate
  $$
  \sup_{0\leq t<T}\|u\|_{H^1}^2+\int_0^T\|u\|_{H^2}^2dt\leq \mathcal G_1<\infty,
  $$
  for any $t\in(0,T)$, where $\mathcal G_1$ is a positive constant.

  Choose a positive time $T_{**}\in(0,T_*)$. By the hypothesis (H1) and recalling the above a priori estimate, starting from the time $T_{**}$, there is a unique solution $u_{**}$ to $(\mathscr P)$ on $\mathbb R^n\times(T_{**}, T_{**}+\mathcal T)$, with initial data $u(T_{**})$, such that $u_{**}\in C([T_{**}, T_{**}+\mathcal T]; H^1)$, where $\mathcal T=\mathcal T(\mathcal G_1)$. Define
  $$
  \tilde u=\left\{
  \begin{array}{l}
  u(x,t),\quad t\in[0,T_{**}),\\
  u_{**}(x,t),\quad t\in[T_{**},T_{**}+\mathcal T(\mathcal G_1)],
  \end{array}
  \right.
  $$
  then, by hypothesis (H2), $\tilde u$ is a solution to $(\mathscr P)$ on $\mathbb R^n\times(0,T_{**}+\mathcal T(\mathcal G_1))$. Note that $\mathcal T(\mathcal G_1)$ is independent of $T_{**}$, therefore, one can choose $T_{**}$ close enough to $T_*$, such that $T_{**}+\mathcal T>T_*$, and as a result $\tilde u$ is an extension of $u$ to a time beyond $T_*$. This completes the proof.
\end{proof}

\begin{proof}[\textbf{Proof of Theorem \ref{thm2}}]
  Similar to the situation encountered in the proof of Theorem \ref{thm1}, it suffices to consider the case $p=\infty$. Besides, without loss of generality, we suppose that
  $$
  \sup_{0\leq t\leq T}\|\nabla u\|_2^2\geq1,
  $$
  otherwise, we then already have the a priori estimate $\sup_{0\leq t\leq T}\|\nabla u\|_2^2\leq1$, on account of which, by the same argument as that in the last paragraph of the proof of Theorem \ref{thm1}, one can easily prove the conclusion.

  We first consider the case that $s\in(-1,1)$. By Lemma \ref{lemL3}, it follows from the Young inequality that
  \begin{align}
    \|\nabla u\|_3^3\leq&C\|u\|_{B^s_{\infty,\infty}}\|\nabla u\|_2^{1+s}\|\nabla u\|_{H^1}^{1-s}\nonumber\\
    \leq&C\|u\|_{B^s_{\infty,\infty}}(\|\nabla u\|_2^2+\|\nabla u\|_2^{1+s}\|\Delta u\|_2^{1-s})\nonumber\\
    \leq&\frac{c_2}{2}\|\Delta u\|_2^2+C(\|u\|_{B^s_{\infty,\infty}}+
    \|u\|_{B^s_{\infty,\infty}}^{\frac{2}{1+s}})\|\nabla u\|_2^2\nonumber\\
    \leq&\frac{c_2}{2}\|\Delta u\|_2^2+C(1+
    \|u\|_{B^s_{\infty,\infty}}^{\frac{2}{1+s}})\|\nabla u\|_2^2,\label{ne1}
  \end{align}
  and thus it follows from the hypothesis (H4') that
  $$
  \frac{d}{dt}\|\nabla u\|_2^2+\frac{c_2}{2}\|\Delta u\|_2^2\leq C(1+\|u\|_{B^s_{\infty,\infty}}^{\frac{2}{1+s}})\|\nabla u\|_2^2,
  $$
  for any $t\in(0,T)$.
  Applying the Gronwall inequality to the above inequality yields
  \begin{align*}
    &\sup_{0\leq\tau \leq t}\|\nabla u\|_2^2+\frac{c_2}{2}\int_0^t\|\Delta u\|_2^2d\tau\\
    \leq&e^{C\int_0^t(1+\|u\|_{B^s_{\infty,\infty}}^{\frac{2}{1+s}})d\tau} \|\nabla u_0\|_2^2\leq e^{C\int_0^T(1+\|u\|_{B^s_{\infty,\infty}}^{\frac{2}{1+s}})d\tau} \|\nabla u_0\|_2^2,
  \end{align*}
  for any $t\in(0,T)$. With the aid of this a priori estimate, recalling the hypothesis (H3), we have the a priori estimate
  $$
  \sup_{0\leq\tau\leq t} \|u\|_{H^1}^2+\int_0^t\|u\|_{H^2}^2d\tau\leq\mathcal G_2<\infty,
  $$
  for any $t\in(0,T)$, where $\mathcal G_2$ is a positive constant. Thanks to this a priori estimate, the same argument as that for the proof of Theorem \ref{thm1} yields the conclusion for the case $s\in(-1,1)$.

Now we consider the case that $(p,s)=(\infty,1)$. Let $\varepsilon$ be a sufficiently small positive number which will be determined later. Since $u\in L^1(0,T_*; B^1_{\infty,\infty}(\mathbb R^d))$, by the absolutely continuity of the integrals, there is a positive constant $\delta>0$, such that
\begin{equation*}
\int_{T_*-\delta}^{T_*}\|u\|_{B^1_{\infty,\infty}}dt\leq\varepsilon,
\end{equation*}
from which, by Lemma \ref{lemL5}, we have
\begin{equation}
  \label{ne2}
  \int_{T_*-\delta}^t\|\nabla u\|_\infty ds\leq C\left[\varepsilon\log\left( \int_{T_*-\delta}^t\|\nabla\Delta u\|_2ds+e\right)+1\right],
\end{equation}
for any $t\in[T-\delta, T)$.
We set
\begin{align*}
f_1(t)=&\|\nabla u\|_2^2(t)+c_2\int_{T_*-\delta}^t \|\Delta u\|_2^2 ds,\\
f_2(t)=&\|\Delta u\|_2^2(t)+c_3\int_{T_*-\delta}^t\|\nabla\Delta u\|_2^2 ds,
\end{align*}
for any $t\in[T_*-\delta, T_*)$.

By the Ladyzhenskaya and Sobolev embedding inequalities, we deduce
\begin{align}
  \int_{T_*-\delta}^t\| \nabla u\|_4^4ds\leq&C\int_{T_*-\delta}^t \|\nabla u\|_2^2\|\Delta u\|_2^2ds\nonumber\\
  \leq&C\sup_{T_*-\delta\leq s\leq t}f_1^2(s),\quad t\in[T_*-\delta, T_*),\label{ne3-2}
\end{align}
for $d=2$, and
\begin{align}
  \int_{T_*-\delta}^t\|\nabla u\|_4^4ds\leq&C\int_{T_*-\delta}^t \|\nabla u\|_2\|\Delta u\|_2^3ds
  \leq C\int_{T-\delta}^t f_1^{\frac12}(s)f_2^{\frac12}(s)\|\Delta u\|_2^2ds\nonumber\\
  \leq&Cf_1(t)\sup_{T_*-\delta\leq s\leq t}f_1^{\frac12}(s)f_2^{\frac12}(s),\quad t\in[T_*-\delta, T_*), \label{ne3-3}
\end{align}
for $d=3$.
It follows from (H4') and (H5) that
\begin{eqnarray*}
  &&f_1'(t)\leq C \|\nabla u\|_\infty f_1(t), \\
  &&f_2'(t)\leq C \|\nabla u\|_\infty f_2(t)+C\| \nabla u\|_4^4,
\end{eqnarray*}
from which, by the Gronwall inequality, recalling (\ref{ne2})---(\ref{ne3-3}), and choosing $\varepsilon$ small enough, we have
\begin{align}
  f_1(t)\leq&C\left(\int_{T_*-\delta}^t\| \nabla\Delta u\|_2ds +1\right)^{C\varepsilon}f_1(T_*-\delta)\nonumber\\
  \leq& C(f_2(t)+1)^{C\varepsilon} \leq C(f_2(t)+1)^{\frac16},\label{ne4}
\end{align}
and
\begin{align}
  f_2(t)\leq&C\left(\int_{T_*-\delta}^t\| \nabla\Delta u\|_2ds +1\right)^{C\varepsilon} \left(f_2(T_*-\delta)+\int_{T_*-\delta}^t\| \nabla u\|_4^4ds\right)\nonumber\\
  \leq&
  \left\{
  \begin{array}{lr}
  C(f_2(t)+1)^{\frac16} \left(1+ \displaystyle\sup_{T_*-\delta\leq s\leq t}f_1^2(s)\right),&d=2,\\
  C(f_2(t)+1)^{\frac16} \left(1+ f_1(t)\displaystyle\sup_{T_*-\delta\leq s\leq t}f_1^{\frac12}(s)f_2^{\frac12}(s)\right),&d=3,
  \end{array}
  \right. \label{ne5}
\end{align}
for any $t\in[T_*-\delta, T_*)$.

Define
$$
F_1(t)=\sup_{T_*-\delta\leq s\leq t}f_1(s)+1, \quad F_2(t)=\sup_{T_*-\delta\leq s\leq t}f_2(s)+1,
$$
for any $t\in[T_*-\delta, T_*)$.
Then, it follows from (\ref{ne4}) that $F_1(t)\leq CF_2^{\frac16}(t)$, and thus it follows from (\ref{ne5}) that $F_2(t)\leq CF_2^{\frac12}(t)$, for $d=2$; similarly,
$F_2(t)\leq CF_2^{\frac16}(t)F_2^{\frac14}(t)F_2^{\frac12}(t)=CF_2^{\frac{11}{12}}(t),$
for $d=3$. Hence $F_1(t),F_2(t)\leq C<\infty,$
for any $t\in[T_*-\delta, T_*)$. On account of this uniform in time estimates, and recalling the hypothesis (H3), one can easily obtain the a priori estimate
  $$
  \sup_{\delta\leq \tau\leq t}\|u\|_{H^2}^2+\int_\delta^t\|u\|_{H^3}^2d\tau\leq\mathcal G_3<\infty,
  $$
  for any $t\in(T_*-\delta, T_*)$, where $\mathcal G_3$ is a positive constant. On account of this estimate, the same argument as before yields the conclusion.
\end{proof}


\section*{Acknowledgments}
{The authors would like to thank Professor Zhouping Xin for his valuable discussions and continuous encouragements. J. Li was was supported in part by the National Natural Science Foundation of
China (11971009 and 11871005), by the Key Project of National Natural Science
Foundation of China (12131010), and by the Guangdong Basic and Applied Basic
Research Foundation (2019A1515011621, 2020B1515310005, 2020B1515310002, and
2021A1515010247). M. Wang is partially supported by NSF of China under Grant No. 11771388.
W. Wang was supported by NSFC under grant 12071054, 11671067 and by Dalian High-level Talent Innovation Project (Grant 2020RD09).
}
\par

\end{document}